\definecolor{Red}{cmyk}{0,1,1,0}
\definecolor{verde}{cmyk}{1,0,1,0}
\definecolor{loka}{cmyk}{.5,0,1,.5}
\definecolor{azul}{cmyk}{1,1,0,0}
\numberwithin{equation}{section}
\newcommand{\be}{\begin{equation}}
\newcommand{\ee}{\end{equation}}
\newtheorem{definition}{Definition}
\newtheorem{corolario}{Corollary}
\newtheorem{teorema}{Theorem}
\begin{document}
\title{Ulam-Hyers-Rassias stability for a class of fractional integro-differential equations}
\author{J. Vanterler da C. Sousa$^1$}
\address{$^1$ Department of Applied Mathematics, Institute of Mathematics,
 Statistics and Scientific Computation, University of Campinas --
UNICAMP, rua S\'ergio Buarque de Holanda 651,
13083--859, Campinas SP, Brazil\newline
e-mail: {\itshape \texttt{vanterlermatematico@hotmail.com, capelas@ime.unicamp.br }}}
\author{E. Capelas de Oliveira$^1$}

\begin{abstract} By means of the recent $\psi$-Hilfer fractional derivative and of the Banach fixed-point theorem, we investigate stabilities of Ulam-Hyers, Ulam-Hyers-Rassias and semi-Ulam-Hyers-Rassias on closed intervals $[a,b]$ and $[a,\infty)$ for a particular class of fractional integro-differential equations.

\vskip.5cm
\noindent
\emph{Keywords}:  Fractional integro-differential equations, Ulam-Hyers stability, Ulam-Hyers-Rassias stability, semi-Ulam-Hyers-Rassias stability, Banach fixed-point theorem, $\psi-$Hilfer fractional derivative.
\newline 
MSC 2010 subject classifications. 26A33; 34A08, 34K20, 37C25 .
\end{abstract}
\maketitle

\section{Introduction}
In recent years, the fractional calculus (FC) has received increasing attention in the scientific community and plays a crucial role in several areas, of which we mention: pure and applied mathematics, physics, engineering and medicine, among others, specifically for their importance in theory and applications \cite{RHM,IP,KSTJ}. In fact, it is important to note that, the large number of definitions of fractional derivatives and integrals that emerged during this period reveals that the researchers proposed to work in this area of knowledge because it continues to expand \cite{JERO,SAMKO,KSTJ,ZE1,ZE2}.

On the other hand, there was considerable growth in the study of the existence and uniqueness of solutions of fractional differential equations and integral equations \cite{exis2,exis5,exis6,exis7,exis8,exis9,exis10,exis11}. In fact, it is possible to note that researches involving the existence of solutions of linear and nonlinear fractional differential equations are object of recent studies \cite{exis1,exis3,exis4}.

The investigation of Ulam-Hyers stability and Ulam-Hyers-Rassias stability became the object of research by several mathematicians and the study of this area has become one of the central themes of mathematical analysis. Afterwards, with the wide expansion of the FC, the study of stability by means of fractional derivatives, has attracted the attention of the specialized community \cite{est3,est4,est5,est6,est8,est9}. Recently, using the $\psi$-Hilfer fractional derivative and the Banach fixed-point theorem, a considerable attention was paid to the study of the stability of Ulam-Hyers and Ulam-Hyers-Rassias involving a nonlinear differential integral equation \cite{est1,est2,est42}. 

In addition, Wang et al. \cite{exis11} discussed stability of fractional differential equations in several works and obtained some new and interesting stability results. Unfortunately, it is important to mention that there are few studies related to the stability of fractional equations of the impulsive type, among which we cite \cite{imp1,imp2,imp3,imp4,imp5,imp6}. In this sense, we are working on a research project in which, is directing to the study of Ulam-Hyers stability of impulsive fractional differential equations. Also recent is the book by Benchohra et al. \cite{exis10} where they address the existence and stability of solutions to problems of initial value and boundary problems for differential and integral functional equations, using the Riemann-Liouville and Caputo fractional derivatives

In this paper, we consider the following class of fractional integro-differential equations
\begin{equation}\label{eq1}
\left\{ 
\begin{array}{ccc}
^{H}\mathbb{D}_{a+}^{\alpha ,\beta ;\psi }y\left( x\right) & = & \displaystyle f\left(
x,y\left( x\right) ,\int_{a}^{x}K\left( x,\tau ,y\left( \tau \right)
,y\left( \delta \left( \tau \right) \right) \right) d\tau \right) \\ 
I_{a+}^{1-\gamma ;\psi }y\left( a\right) & = & c,\text{ }\gamma =\alpha
+\beta \left( 1-\alpha \right) \text{\ \ }
\end{array}
\right.
\end{equation}
where $^{H}\mathbb{D}_{a+}^{\alpha ,\beta ;\psi }(\cdot)$ is $\psi$-Hilfer fractional derivative \cite{ZE1}, $I_{a+}^{1-\gamma ;\psi }(\cdot)$ is $\psi$-Riemann-Liouville fractional integral \cite{ZE1}, with $0<\alpha <1$, $0\leq \beta \leq 1$, $y\in C^{1}\left[ a,b\right] $, for $x\in \left[ a,b\right]$, $a$ and $b$ are fixed real numbers, $f:\left[ a,b\right] \times  \mathbb{C}\times \mathbb{C}\rightarrow \mathbb{C}$ and $K:[a,b]\times[a,b]\times\mathbb{C}\times\mathbb{C}\rightarrow\mathbb{C}$ are continuous functions, and $\delta :\left[ a,b\right] \rightarrow \left[ a,b\right] $ is a continuous delay function.

The main objective of the paper is to analyze the various types of stability, that is, Ulam-Hyers, Ulam-Hyers-Rassias and semi-Ulam-Hyers-Rassias in the interval $[a,b]$ for a class of fractional integro-differential equations, through the Banach fixed-point theorem. Besides that, the stability of Ulam-Hyers-Rassias in the semi infinite interval $[a,\infty)$ is also discussed. 

The paper is organized as follows: Section 2, presents, as preliminaries, the definition of the $\psi$-Hilfer fractional derivative, fractional integral of Riemann-Liouville with respect to another function and some important theorems, as well as the spaces in which such operators and theorems are defined. Besides that, we present the concepts of Ulam-Hyers, Ulam-Hyers-Rassias and semi-Ulam-Hyers-Rassias stabilities and the Banach fixed-point theorem. In Section 3, our main result, we discuss the Ulam-Hyers, Ulam-Hyers-Rassias and semi-Ulam-Hyers-Rassias stabilities in the interval $[a,b]$. Also, Ulam-Hyers-Rassias stability in the semi infinite interval $[a,\infty)$ is also discussed. Concluding remarks close the paper.


\section{Preliminaries}

In this section we introduce the function spaces in which they are of paramount importance to define the $\psi$-Riemann-Liouville fractional integral and the $\psi$-Hilfer fractional derivative. In this sense, we present two important theorems in obtaining the main results. Also, we introduce the concept of stability of Ulam-Hyers and Ulam-Hyers-Rassias and Banach fixed point theorem.

Let $\left[ a,b\right] $ $\left( 0<a<b<\infty \right) $ be a finite interval on the half-axis $\mathbb{R}^{+}$ and let $C\left[ a,b\right] $ be the space of continuous function $f$ on $\left[ a,b\right] $ with the norm \cite{ZE1}
\begin{equation}
\left\Vert f\right\Vert _{C\left[ a,b\right] }=\underset{x\in \left[ a,b%
\right] }{\max }\left\vert f\left( x\right) \right\vert .
\end{equation}

The weighted space $C_{1-\gamma ;\psi }\left[ a,b\right] $ of continuous $f$
on $\left( a,b\right] $ is defined by 
\begin{equation}
C_{1-\gamma ;\psi }\left[ a,b\right] =\left\{ f:\left( a,b\right]
\rightarrow \mathbb{R};\left( \psi \left( x\right) -\psi \left( a\right)
\right) ^{1-\gamma }f\left( x\right) \in C\left[ a,b\right] \right\}
\end{equation}%
$0\leq \gamma <1$ with the norm 
\begin{equation}
\left\Vert f\right\Vert _{C_{1-\gamma ;\psi }\left[ a,b\right] }=\left\Vert
\left( \psi \left( x\right) -\psi \left( a\right) \right) ^{1-\gamma
}f\left( x\right) \right\Vert _{C\left[ a,b\right] }=\underset{x\in \left[
a,b\right] }{\max }\left\vert \left( \psi \left( x\right) -\psi \left(
a\right) \right) ^{1-\gamma }f\left( x\right) \right\vert .
\end{equation}

The weighted space $C_{\gamma ;\psi }^{n}\left[ a,b\right] $ of continuous $f $ on $\left( a,b\right] $ is defined by 
\begin{equation}
C_{\gamma ;\psi }^{n}\left[ a,b\right] =\left\{ f:\left( a,b\right]
\rightarrow \mathbb{R};f\left( x\right) \in C^{n-1}\left[ a,b\right] ;\text{ 
}f^{\left( n\right) }\left( x\right) \in C_{\gamma ;\psi }\left[ a,b\right]
\right\}
\end{equation}
$0\leq \gamma <1$ with the norm 
\begin{equation}
\left\Vert f\right\Vert _{C_{\gamma ;\psi }^{n}\left[ a,b\right] }=\overset{
n-1}{\underset{k=0}{\sum }}\left\Vert f^{\left( k\right) }\right\Vert _{C
\left[ a,b\right] }+\left\Vert f^{\left( n\right) }\right\Vert _{C_{\gamma
;\psi }\left[ a,b\right] }.
\end{equation}

\begin{definition}{\rm\cite{ZE1}} Let $\left( a,b\right) $ $\left( -\infty \leq a<b\leq \infty \right) $ be a finite interval {\rm{(or infinite)}} of the real line $\mathbb{R}$ and let $\alpha >0$. Also let $\psi \left( x\right) $ be an increasing and positive monotone function on $\left( a,b\right] ,$ having a continuous derivative $\psi ^{\prime }\left( x\right)$ {\rm{(we denote first derivative as $\dfrac{d}{dx}\psi(x)=\psi'(x)$)}} on $\left( a,b\right) $. The left-sided fractional integral of a function $f$ with respect to a function $\psi $ on $ 
\left[ a,b\right] $ is defined by 
\begin{equation}\label{eq7}
I_{a+}^{\alpha ;\psi }f\left( x\right) =\frac{1}{\Gamma \left( \alpha
\right) }\int_{a}^{x}\psi ^{\prime }\left( s\right) \left( \psi \left(
x\right) -\psi \left( s\right) \right) ^{\alpha -1}f\left( s\right) ds.
\end{equation}

The right-sided fractional integral is defined in an analogous form.
\end{definition}

As the aim of this paper is present some types of the stabilities involving a class of fractional integrodifferential equations by means of $\psi$-Hilfer operator, we introduce this operator.

\begin{definition}{\rm\cite{ZE1}} Let $n-1<\alpha <n$ with $n\in \mathbb{N},$ let $\ I=\left[ a,b\right] $ be an interval such that $-\infty \leq a<b\leq \infty $ and let $f,\psi \in C^{n}\left[ a,b\right] $ be two functions such that $\psi $ is increasing and $\psi ^{\prime }\left( x\right) \neq 0,$ for all $x\in I$. The left-sided $\psi -$Hilfer fractional derivative $^{H}\mathbb{D}_{a+}^{\alpha ,\beta ;\psi }\left( \cdot \right) $ of a function $f$ of order $\alpha $ and type $0\leq \beta \leq 1,$ is defined by 
\begin{equation}\label{eq8}
^{H}\mathbb{D}_{a+}^{\alpha ,\beta ;\psi }f\left( x\right) =I_{a+}^{\beta \left( n-\alpha \right) ;\psi }\left( \frac{1}{\psi ^{\prime }\left( x\right) }\frac{d}{dx}\right) ^{n}I_{a+}^{\left( 1-\beta \right) \left(
n-\alpha \right) ;\psi }f\left( x\right) .
\end{equation}

The right-sided $\psi-$Hilfer fractional derivative is defined in an analogous form.
\end{definition}

\begin{teorema}\label{teo1} If $f\in C^{1}\left[ a,b\right] $, $0<\alpha <1$ and $0\leq \beta \leq 1$, then
\begin{equation*}
I_{a+}^{\alpha ;\psi }\text{ }^{H}\mathbb{D}_{a+}^{\alpha ,\beta ;\psi }f\left( x\right) =f\left( x\right) -\frac{\left( \psi \left( x\right) -\psi\left( a\right) \right) ^{\gamma -1}}{\Gamma \left( \gamma \right) }I_{a+}^{\left( 1-\beta \right) \left( 1-\alpha \right) ;\psi }f\left(a\right).
\end{equation*}
\end{teorema}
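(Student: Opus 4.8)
The plan is to collapse the composition $I_{a+}^{\alpha;\psi}\,{}^{H}\mathbb{D}_{a+}^{\alpha,\beta;\psi}$ into a single fractional integral of a Riemann--Liouville derivative and then invoke an inversion relation for that derivative. Since $0<\alpha<1$ we take $n=1$ in \eqref{eq8}; writing $D_\psi=\frac{1}{\psi'(x)}\frac{d}{dx}$ and recalling $\gamma=\alpha+\beta(1-\alpha)$, the two exponents appearing there simplify to $(1-\beta)(1-\alpha)=1-\gamma$ and $\beta(1-\alpha)=\gamma-\alpha$. Thus the inner block $D_\psi I_{a+}^{1-\gamma;\psi}f$ is exactly the $\psi$-Riemann--Liouville derivative of order $\gamma$, which I denote $D_{a+}^{\gamma;\psi}f$, and \eqref{eq8} factors as
\be
{}^{H}\mathbb{D}_{a+}^{\alpha,\beta;\psi}f(x)=I_{a+}^{\gamma-\alpha;\psi}\,D_{a+}^{\gamma;\psi}f(x).
\ee

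First I would establish the semigroup law $I_{a+}^{\alpha;\psi}I_{a+}^{\mu;\psi}=I_{a+}^{\alpha+\mu;\psi}$ for $\alpha,\mu>0$ directly from \eqref{eq7}: after interchanging the order of integration (Fubini), the substitution $u=\frac{\psi(t)-\psi(s)}{\psi(x)-\psi(s)}$ turns the inner integral into the Beta function $B(\alpha,\mu)=\Gamma(\alpha)\Gamma(\mu)/\Gamma(\alpha+\mu)$. Applying $I_{a+}^{\alpha;\psi}$ to the factorization above with $\mu=\gamma-\alpha$ (reading $I^{0}_{a+;\psi}$ as the identity when $\beta=0$) then gives
\be
I_{a+}^{\alpha;\psi}\,{}^{H}\mathbb{D}_{a+}^{\alpha,\beta;\psi}f(x)=I_{a+}^{\gamma;\psi}\,D_{a+}^{\gamma;\psi}f(x),
\ee
so, since $I_{a+}^{1-\gamma;\psi}=I_{a+}^{(1-\beta)(1-\alpha);\psi}$, the theorem reduces to the single inversion identity
\be
I_{a+}^{\gamma;\psi}\,D_{a+}^{\gamma;\psi}f(x)=f(x)-\frac{(\psi(x)-\psi(a))^{\gamma-1}}{\Gamma(\gamma)}\,I_{a+}^{1-\gamma;\psi}f(a).
\ee

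To prove this inversion identity I would set $g=I_{a+}^{1-\gamma;\psi}f$, so that $D_{a+}^{\gamma;\psi}f=D_\psi g$, and apply $I_{a+}^{1-\gamma;\psi}$ to both candidate sides. On the left, the semigroup law and the ordinary fundamental theorem of calculus give $I_{a+}^{1-\gamma;\psi}I_{a+}^{\gamma;\psi}D_\psi g=I_{a+}^{1;\psi}D_\psi g=g-g(a)$. On the right, the power rule $I_{a+}^{\mu;\psi}(\psi(x)-\psi(a))^{\nu-1}=\frac{\Gamma(\nu)}{\Gamma(\nu+\mu)}(\psi(x)-\psi(a))^{\nu+\mu-1}$ (the power case of the same Beta computation) with $\nu=\gamma$, $\mu=1-\gamma$ yields $I_{a+}^{1-\gamma;\psi}(\psi(x)-\psi(a))^{\gamma-1}=\Gamma(\gamma)$, so the right side also collapses to $g-g(a)$. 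Both sides therefore share the same $I_{a+}^{1-\gamma;\psi}$-image, and injectivity of the $\psi$-fractional integral on the weighted space $C_{1-\gamma;\psi}[a,b]$ closes the identity.

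The main obstacle is the regularity bookkeeping hidden in the step $I_{a+}^{1;\psi}D_\psi g=g-g(a)$: this requires $g=I_{a+}^{1-\gamma;\psi}f$ to be genuinely differentiable with the endpoint behavior prescribed by the weighted norm, since the kernel $(\psi(x)-\psi(s))^{-\gamma}$ is singular at $s=x$ and $I_{a+}^{1-\gamma;\psi}f(a)$ must be finite. Working throughout in $C_{1-\gamma;\psi}[a,b]$ rather than $C[a,b]$ is precisely what makes the boundary value $g(a)$, the differentiation defining $D_\psi g$, and the injectivity of $I_{a+}^{1-\gamma;\psi}$ all legitimate; a direct integration by parts in the regime $\gamma-1<0$ would instead force one to confront the same endpoint singularity through an explicit $\varepsilon$-regularization on $[a,x-\varepsilon]$ with the two divergent pieces cancelling, which is why I prefer the $I_{a+}^{1-\gamma;\psi}$-pairing route.
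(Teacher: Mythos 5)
Your argument is correct, but note that the paper itself offers no proof of Theorem \ref{teo1} at all: it simply defers to the reference \cite{ZE1}, so there is no internal argument to compare yours against. What you have written is essentially the standard derivation from that cited work, reconstructed independently: the identities $(1-\beta)(1-\alpha)=1-\gamma$ and $\beta(1-\alpha)=\gamma-\alpha$ do collapse the definition in Eq.(\ref{eq8}) (with $n=1$) to $I_{a+}^{\gamma-\alpha;\psi}D_{\psi}I_{a+}^{1-\gamma;\psi}f$, the semigroup law via Fubini and the Beta function is sound (the degenerate case $\beta=0$, where $\gamma-\alpha=0$, is handled by your convention that the order-zero integral is the identity, and $\gamma\geq\alpha>0$ keeps the Beta integrals convergent), and the reduction to the single inversion identity $I_{a+}^{\gamma;\psi}D_{a+}^{\gamma;\psi}f=f-\frac{(\psi(x)-\psi(a))^{\gamma-1}}{\Gamma(\gamma)}I_{a+}^{1-\gamma;\psi}f(a)$ is exactly how the $\psi$-Hilfer calculus is developed in \cite{ZE1}. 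Your pairing trick --- applying $I_{a+}^{1-\gamma;\psi}$ to both candidate sides, collapsing each to $g-g(a)$ with $g=I_{a+}^{1-\gamma;\psi}f$, and invoking injectivity of the fractional integral on $C_{1-\gamma;\psi}[a,b]$ --- is a clean way to avoid the endpoint singularity of $(\psi(x)-\psi(s))^{-\gamma}$, and you correctly flag the one real obligation you leave implicit: that $f\in C^{1}[a,b]$ guarantees $g$ is differentiable on $(a,b]$ with a finite limit $g(a)$, which is what legitimizes $I_{a+}^{1;\psi}D_{\psi}g=g-g(a)$. If you wanted the write-up to be fully self-contained you would spell out that regularity lemma and the injectivity of $I_{a+}^{\mu;\psi}$ on the weighted space, but as a proof sketch this is complete and, unlike the paper, actually supplies the argument.
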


\begin{proof}
See {\rm\cite{ZE1}}.
\end{proof}

\begin{teorema}\label{teo2} If $f\in C^{1}\left[ a,b\right] $, $0<\alpha <1$ and $0\leq \beta\leq 1$, we have
\begin{equation}
^{H}\mathbb{D}_{a+}^{\alpha ,\beta ;\psi }\text{ }I_{a+}^{\alpha ;\psi }f\left( x\right) =f\left( x\right) .
\end{equation}
\end{teorema}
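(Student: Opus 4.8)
The plan is to unfold the definition (\ref{eq8}) with $n=1$ (legitimate since $0<\alpha<1$) and then collapse the two innermost integral operators by the semigroup (index) law for the $\psi$-Riemann--Liouville integral, $I_{a+}^{\mu;\psi}I_{a+}^{\nu;\psi}=I_{a+}^{\mu+\nu;\psi}$ for $\mu,\nu\ge 0$. Writing $\sigma=\beta(1-\alpha)$, the definition gives
\begin{equation*}
{}^{H}\mathbb{D}_{a+}^{\alpha ,\beta ;\psi }I_{a+}^{\alpha ;\psi }f(x)
= I_{a+}^{\beta(1-\alpha);\psi}\left(\frac{1}{\psi'(x)}\frac{d}{dx}\right)
I_{a+}^{(1-\beta)(1-\alpha);\psi}I_{a+}^{\alpha;\psi}f(x),
\end{equation*}
and merging the two inner operators yields $I_{a+}^{(1-\beta)(1-\alpha)+\alpha;\psi}=I_{a+}^{1-\sigma;\psi}$, since $(1-\beta)(1-\alpha)+\alpha=1-\beta(1-\alpha)$. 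Thus the problem reduces to proving $I_{a+}^{\sigma;\psi}\big(\tfrac{1}{\psi'}\tfrac{d}{dx}\big)I_{a+}^{1-\sigma;\psi}f=f$, where $0\le\sigma<1$.

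For the reduced identity I would exploit that the substitution $t=\psi(x)$ turns every $\psi$-operator into its classical Riemann--Liouville counterpart: $I_{a+}^{\mu;\psi}f(x)$ becomes an ordinary integral of $g=f\circ\psi^{-1}$, while $\frac{1}{\psi'(x)}\frac{d}{dx}$ becomes $\frac{d}{dt}$. This both justifies the index laws above and yields, after differentiating under the integral sign, the commutation formula
\begin{equation*}
\frac{1}{\psi'(x)}\frac{d}{dx}\,I_{a+}^{\sigma;\psi}h(x)
= I_{a+}^{\sigma;\psi}\!\left(\frac{1}{\psi'(x)}\frac{d}{dx}h\right)(x)
+\frac{(\psi(x)-\psi(a))^{\sigma-1}}{\Gamma(\sigma)}\,h(a).
\end{equation*}
Applying this with $h=I_{a+}^{1-\sigma;\psi}f$ and solving for $I_{a+}^{\sigma;\psi}(\tfrac{1}{\psi'}\tfrac{d}{dx}h)$ moves the differentiation outside the integral at the cost of the boundary term $\dfrac{(\psi(x)-\psi(a))^{\sigma-1}}{\Gamma(\sigma)}\,h(a)$.

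The proof then closes quickly: the boundary term vanishes because $h(a)=I_{a+}^{1-\sigma;\psi}f(a)=0$, the defining integral being taken over the degenerate interval $[a,a]$ (here $f$ bounded and $\sigma<1$ make the weakly singular kernel integrable), leaving $\frac{1}{\psi'}\frac{d}{dx}\,I_{a+}^{\sigma;\psi}I_{a+}^{1-\sigma;\psi}f=\frac{1}{\psi'}\frac{d}{dx}\,I_{a+}^{1;\psi}f$ after one more use of the semigroup law; and since $I_{a+}^{1;\psi}f(x)=\int_a^x\psi'(s)f(s)\,ds$, the fundamental theorem of calculus gives $\frac{1}{\psi'(x)}\frac{d}{dx}I_{a+}^{1;\psi}f(x)=f(x)$. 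The degenerate case $\sigma=0$ (that is, $\beta=0$) is immediate, as the outer integral and the boundary term are then absent. I expect the only genuinely delicate point to be the justification of the commutation formula --- differentiating the weakly singular integral $I_{a+}^{\sigma;\psi}h$ under the integral sign and isolating the contribution at $s=a$ --- together with the vanishing of that boundary contribution; both are secured by the hypothesis $f\in C^{1}[a,b]$, which supplies the boundedness and regularity needed for the interchange.
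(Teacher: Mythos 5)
The paper does not actually prove this theorem: its ``proof'' is the single line ``See \cite{ZE1}'', deferring to the authors' earlier work on the $\psi$-Hilfer derivative, so there is no in-text argument to compare yours against line by line. Your self-contained proof is, in outline, correct and is essentially the standard argument from that reference: unfolding the definition with $n=1$, using the semigroup law to reduce to $I_{a+}^{\sigma;\psi}\bigl(\tfrac{1}{\psi'}\tfrac{d}{dx}\bigr)I_{a+}^{1-\sigma;\psi}f=f$ with $\sigma=\beta(1-\alpha)\in[0,1)$ (your index arithmetic $(1-\beta)(1-\alpha)+\alpha=1-\sigma$ is right), commuting $\tfrac{1}{\psi'}\tfrac{d}{dx}$ past $I_{a+}^{\sigma;\psi}$ at the cost of a boundary term, and killing that term via $I_{a+}^{1-\sigma;\psi}f(a)=0$. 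The one point you should tighten is the hypothesis under which the commutation formula is invoked: you apply it to $h=I_{a+}^{1-\sigma;\psi}f$, and this $h$ is generally \emph{not} $C^{1}$ up to the endpoint $a$, since
\begin{equation*}
\left(\frac{1}{\psi'(x)}\frac{d}{dx}\right)I_{a+}^{1-\sigma;\psi}f(x)
= I_{a+}^{1-\sigma;\psi}\!\left(\frac{f'}{\psi'}\right)(x)
+\frac{\left(\psi(x)-\psi(a)\right)^{-\sigma}}{\Gamma(1-\sigma)}\,f(a),
\end{equation*}
which is unbounded at $a$ whenever $f(a)\neq 0$ and $\sigma>0$. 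So the justification cannot rest on ``$f\in C^{1}$ supplies the regularity of $h$'' alone; you need this explicit form of $h'$, the observation that the singularity $(\psi(s)-\psi(a))^{-\sigma}$ is integrable against the kernel $(\psi(x)-\psi(s))^{\sigma-1}$ (a Beta-function computation), and then the dominated-convergence argument on the difference quotients. You correctly flag this as the delicate step; spelling it out as above closes the only real gap. Everything else --- the reduction via $t=\psi(x)$, the vanishing of $h(a)$, the final appeal to the fundamental theorem of calculus, and the separate treatment of $\beta=0$ --- checks out.
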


\begin{proof}
See {\rm\cite{ZE1}} .
\end{proof}

We present the concepts of Ulam-Hyers and Ulam-Hyers-Rassias stabilities, both fundamental in the study of the main results of the article. The following Definition \ref{def4} and Definition \ref{def5}, were adapted from paper \cite{castro}.

\begin{definition}\label{def4} For each function $y$ satisfying 
\begin{equation}\label{eq11}
\left\vert ^{H}\mathbb{D}_{a+}^{\alpha ,\beta ;\psi }y\left( x\right) -f\left( x,y\left( x\right) ,\int_{a}^{x}K\left( x,\tau ,y\left( \tau \right) ,y\left( \delta \left( \tau \right) \right) \right) d\tau \right)\right\vert \leq \theta
\end{equation}
$x\in \left[ a,b\right] ,$ where $\theta \geq 0$, there is a solution $y_{0}$ of the fractional integro-differential equation and a constant $C>0$ independent of $y$ and $y_{0}$ such that
\begin{equation}\label{eq12}
\left\vert y\left( x\right) -y_{0}\left( x\right) \right\vert \leq C\theta
\end{equation}
for all $x\in \left[ a,b\right] $, then we say that the integro-differential equation has the Ulam-Hyers stability.

If instead of $\theta $, in \textnormal{ Eq.(\ref{eq11})} and \textnormal{ Eq.(\ref{eq12})}, we have a nonnegative function $\sigma $ defined on $\left[ a,b\right] $, then we say that the fractional integro-differential equation has the Ulam-Hyers-Rassias stability.
\end{definition}

\begin{definition} \label{def5} If for each function $y$ satisfying
\begin{equation}\label{eq13}
\left\vert ^{H}\mathbb{D}_{a+}^{\alpha ,\beta ;\psi }y\left( x\right) -f\left( x,y\left( x\right) ,\int_{a}^{x}K\left( x,\tau ,y\left( \tau \right) ,y\left( \delta \left( \tau \right) \right) \right) d\tau \right)\right\vert \leq \theta
\end{equation}
$x\in \left[ a,b\right] $, where $\theta \geq 0$, there is a solution $y_{0}$ of the fractional integro-differential equation and a constant $C>0$ independent of $y$ and $y_{0}$ such that
\begin{equation}\label{eq14}
\left\vert y\left( x\right) -y_{0}\left( x\right) \right\vert \leq C\sigma \left( x\right)
\end{equation}
$x\in \left[ a,b\right] $ for some nonnegative function $\sigma $ defined on  $\left[ a,b\right] $, then we say that the fractional integro-differential equation has the so-called semi-Ulam-Hyers-Rassias stability.
\end{definition}

\begin{definition}{\rm\cite{fixed}} We say that $d:X\times X\rightarrow \left[ 0,\infty \right] $ is a generalized metric on $X$ if:
\begin{enumerate}
\item $d\left( x,y\right) =0$ if and only if $x=y;$
\item $d\left( x,y\right) =d\left( y,x\right) ,$ for all $x,y\in X;$
\item $d\left( x,z\right) \leq d\left( x,y\right) +d\left( y,z\right) $ for all $x,y,z\in X$.
\end{enumerate}
\end{definition}

\begin{teorema}\textnormal{(Banach)}\label{teo3} Let $\left( X,d\right) $ be a generalized complete metric space and $T:X\rightarrow X$ a strictly contractive operator with \ Lipschitz constant $L<1$. If there exists a nonnegative integer $k$ such that $d\left(T^{k+1}x,T^{k}x\right) <\infty $ for some $x\in X$, then the following three propositions hold true:

\begin{enumerate}
\item The sequence $\left( T^{n}x\right) _{n\in \mathbb{N}}$ converges to a fixed point $x^{\ast }$ of $T$;
\item $x^{\ast }$ is the unique fixed point of $T$ in $X^{\ast }=\left\{ y\in X;\text{ }d\left( T^{k}x,y\right) <\infty \right\}$;
\item If $y\in X^{\ast }$, then
\end{enumerate}
\begin{equation}\label{eq15}
d\left( y,x^{\ast }\right) \leq \frac{1}{1-L}d\left( Ty,y\right).
\end{equation}
\end{teorema}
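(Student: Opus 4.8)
The plan is to follow the classical Diaz--Margolis argument, the only real subtlety being that $d$ may take the value $+\infty$, so I must track the finiteness of distances at every step. Fix the point $x\in X$ and the integer $k$ furnished by the hypothesis, and consider the orbit $\left(T^{n}x\right)_{n\geq k}$. The starting observation is that strict contractivity propagates the finiteness assumption: from $d\left(T^{k+1}x,T^{k}x\right)<\infty$ together with $d\left(T^{n+1}x,T^{n}x\right)=d\left(T(T^{n}x),T(T^{n-1}x)\right)\leq L\,d\left(T^{n}x,T^{n-1}x\right)$, an easy induction yields $d\left(T^{n+1}x,T^{n}x\right)\leq L^{\,n-k}d\left(T^{k+1}x,T^{k}x\right)$ for all $n\geq k$.

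First I would show that $\left(T^{n}x\right)_{n\geq k}$ is Cauchy. For $m>n\geq k$, property (3) of the generalized metric gives $d\left(T^{m}x,T^{n}x\right)\leq\sum_{j=n}^{m-1}d\left(T^{j+1}x,T^{j}x\right)\leq d\left(T^{k+1}x,T^{k}x\right)\sum_{j=n}^{m-1}L^{\,j-k}$, and the right-hand side is the tail of a convergent geometric series (as $L<1$), hence tends to $0$ as $n\to\infty$. By completeness of $(X,d)$ the sequence converges to some $x^{\ast}\in X$, which gives proposition (1) once I check that $x^{\ast}$ is a fixed point. For the latter I use $d\left(Tx^{\ast},T^{n+1}x\right)=d\left(Tx^{\ast},T(T^{n}x)\right)\leq L\,d\left(x^{\ast},T^{n}x\right)\to 0$, so $T^{n+1}x\to Tx^{\ast}$; since also $T^{n+1}x\to x^{\ast}$, the two limits must coincide and $Tx^{\ast}=x^{\ast}$.

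Next, for proposition (2), I first record that $x^{\ast}\in X^{\ast}$: bounding $d\left(T^{k}x,x^{\ast}\right)\leq d\left(T^{k}x,T^{m}x\right)+d\left(T^{m}x,x^{\ast}\right)$ and letting $m\to\infty$ shows $d\left(T^{k}x,x^{\ast}\right)\leq\frac{1}{1-L}d\left(T^{k+1}x,T^{k}x\right)<\infty$, using the geometric bound for the first term and $d\left(T^{m}x,x^{\ast}\right)\to 0$ for the second. If now $y\in X^{\ast}$ is any fixed point of $T$, then $d\left(y,x^{\ast}\right)\leq d\left(y,T^{k}x\right)+d\left(T^{k}x,x^{\ast}\right)<\infty$, and contractivity gives $d\left(y,x^{\ast}\right)=d\left(Ty,Tx^{\ast}\right)\leq L\,d\left(y,x^{\ast}\right)$; since $d\left(y,x^{\ast}\right)$ is finite and $L<1$, this forces $d\left(y,x^{\ast}\right)=0$, i.e. $y=x^{\ast}$, establishing uniqueness in $X^{\ast}$.

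Finally, for the estimate (3), let $y\in X^{\ast}$; as above $d\left(y,x^{\ast}\right)<\infty$. The triangle inequality and contractivity give $d\left(y,x^{\ast}\right)\leq d\left(y,Ty\right)+d\left(Ty,Tx^{\ast}\right)\leq d\left(Ty,y\right)+L\,d\left(y,x^{\ast}\right)$, and rearranging (legitimate because $d\left(y,x^{\ast}\right)$ is finite) yields $\left(1-L\right)d\left(y,x^{\ast}\right)\leq d\left(Ty,y\right)$, which is exactly \eqref{eq15}. The part demanding the most care is precisely this bookkeeping of $+\infty$: each cancellation of a $d(\cdot,\cdot)$ term across an inequality is valid only because the relevant distance has already been shown to be finite, and the hypothesis $d\left(T^{k+1}x,T^{k}x\right)<\infty$ together with the restriction to $X^{\ast}$ are exactly what guarantee this.
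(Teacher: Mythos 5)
Your proof is correct and complete: it is the standard Diaz--Margolis argument, with the one genuinely delicate point (tracking which distances are finite before cancelling them across inequalities) handled properly at every step. The paper itself gives no proof of this theorem --- it simply cites the reference \cite{fixed} --- so there is nothing to contrast; your argument is essentially the one found in that source.
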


\begin{proof}
See \cite{fixed}.
\end{proof}

\section{Main Results}

In this section it will be divided into subsections in which we present the main purpose of this paper. First, we study the stability of Ulam-Hyers, semi-Ulam-Hyers-Rassias and Ulam-Hyers-Rassias in the finite interval, by means of the Bielecki metric. In this sense, we also discuss the stability of Ulam-Hyers-Rassias stability in the infinite range, by means of the metric $d_{b}$.

\subsection{Ulam-Hyers-Rassias stability}

In this subsection, we present sufficient conditions to obtain the Ulam-Hyers-Rassias stability of the integro-differential fractional equation where $x\in\left[a,b\right]$, for some fixed real numbers $a$ and $b$.

Consider the space of continuously differentiable functions $C^{1}\left[ a,b\right] $ on $\left[ a,b\right] $ endowed with a Bielecki type metric 
\begin{equation}\label{eq16}
d\left( u,v\right) =\underset{x\in \left[ a,b\right] }{\sup }\frac{\left\vert u\left( x\right) -v\left( x\right) \right\vert }{\sigma \left( x\right) }
\end{equation}
where $\sigma $ is a non decreasing continuous function $\sigma :\left[ a,b\right] \rightarrow \left( 0,\infty \right) $. We recall that $\left( C^{1}\left[ a,b\right] ,d\right) $ is a complete metric space \cite{cuada}.

\begin{teorema}\label{teo4} Let $\delta :\left[ a,b\right] \rightarrow \left[ a,b\right] $ be a continuous delay function with $\delta \left( t\right) \leq t$ for all $t\in \left[ a,b\right] $ and $\sigma :\left[ a,b\right] \rightarrow \left(0,\infty \right) $ a non decreasing continuous function. In addition, suppose that there is $\xi \in \left[ 0,1\right)$, such that
\begin{equation}\label{eq17}
\frac{1}{\Gamma \left( \alpha \right) }\int_{a}^{x}\psi ^{\prime }\left( \tau \right) \left( \psi \left( x\right) -\psi \left( \tau \right) \right) ^{\alpha -1}\sigma \left( \tau \right) d\tau \leq \xi \sigma \left( x\right)
\end{equation}
for all $x\in \left[ a,b\right] .$ Moreover, suppose that $f:\left[ a,b\right] \times  \mathbb{C} \times \mathbb{C}\rightarrow \mathbb{C} $ is a continuous function satisfying the Lipschitz condition
\begin{equation}\label{eq18}
\left\vert f\left( x,u,g\right) -f\left( x,v,h\right) \right\vert \leq M\left( \left\vert u-v\right\vert +\left\vert g-h\right\vert \right)
\end{equation}
with $M>0$ and the kernel $K:\left[ a,b\right] \times \left[ a,b\right] \times\mathbb{C}\times \mathbb{C}\rightarrow \mathbb{C}$ is a continuous function satisfying the Lipschitz condition
\begin{equation}\label{eq19}
\left\vert K\left( x,u,g\right) -K\left( x,v,h\right) \right\vert \leq L\left\vert g-h\right\vert
\end{equation}
with $L>0$.

If $y\in C^{1}\left[ a,b\right] $ is such that
\begin{equation}\label{eq20}
\left\vert ^{H}\mathbb{D}_{a+}^{\alpha ,\beta ;\psi }y\left( x\right) -f\left( x,y\left( x\right) ,\int_{a}^{x}K\left( x,\tau ,y\left( \tau \right) ,y\left( \delta \left( \tau \right) \right) \right) d\tau \right)
\right\vert \leq \sigma \left( x\right) 
\end{equation}
$x\in \left[ a,b\right] ,$ and $M\left( \xi +L\xi ^{2}\right) <1$, then there is a unique function, $y_{0}\in C^{1}\left[ a,b\right] $ such that
\begin{equation*}
^{H}\mathbb{D}_{a+}^{\alpha ,\beta ;\psi }y_{0}\left( x\right) =f\left( x,y_{0}\left( x\right) ,\int_{a}^{x}K\left( x,\tau ,y_{0}\left( \tau \right) ,y_{0}\left( \delta \left( \tau \right) \right) \right) d\tau \right)
\end{equation*}
and
\begin{equation}\label{eq21}
\left\vert y\left( x\right) -y_{0}\left( x\right) \right\vert \leq \frac{\xi \sigma \left( x\right) }{1-M\left( \xi +L\xi ^{2}\right) }
\end{equation}
for all $x\in \left[ a,b\right]$.

This means that under above conditions, the fractional integro-differential equation \textnormal{Eq.(\ref{eq1})} has the Ulam-Hyers-Rassias stability.
\end{teorema}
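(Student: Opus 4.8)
The plan is to recast the initial value problem \eqref{eq1} as a fixed-point equation and then apply the Banach fixed-point theorem (Theorem \ref{teo3}) in the Bielecki metric \eqref{eq16}. Applying $I_{a+}^{\alpha;\psi}$ to the $\psi$-Hilfer equation and invoking Theorem \ref{teo1}, and noting that $(1-\beta)(1-\alpha)=1-\gamma$ so that the initial condition $I_{a+}^{1-\gamma;\psi}y(a)=c$ can be substituted, the inversion formula becomes
\begin{equation*}
y(x)=\frac{(\psi(x)-\psi(a))^{\gamma-1}}{\Gamma(\gamma)}\,c+I_{a+}^{\alpha;\psi}f\Big(x,y(x),\textstyle\int_{a}^{x}K(x,\tau,y(\tau),y(\delta(\tau)))\,d\tau\Big).
\end{equation*}
This motivates the operator $T$ on $C^{1}[a,b]$ defined by the right-hand side, whose fixed points are exactly the solutions $y_{0}$ of \eqref{eq1}. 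The boundary term is independent of $y$, so it cancels in every difference $Tu-Tv$.

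Next I would verify that $T$ is strictly contractive for the metric $d$ of \eqref{eq16}. Writing $G_{y}(s)=\int_{a}^{s}K(s,\tau,y(\tau),y(\delta(\tau)))\,d\tau$, the Lipschitz hypothesis \eqref{eq18} gives
\begin{equation*}
|Tu(x)-Tv(x)|\le \frac{M}{\Gamma(\alpha)}\int_{a}^{x}\psi'(s)(\psi(x)-\psi(s))^{\alpha-1}\big(|u(s)-v(s)|+|G_{u}(s)-G_{v}(s)|\big)\,ds.
\end{equation*}
The first summand is handled by factoring $|u(s)-v(s)|=\tfrac{|u(s)-v(s)|}{\sigma(s)}\sigma(s)\le d(u,v)\sigma(s)$ and then applying the key inequality \eqref{eq17}, producing $M\xi\,\sigma(x)\,d(u,v)$. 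For the second summand I would first estimate $|G_{u}(s)-G_{v}(s)|$ using the kernel Lipschitz condition \eqref{eq19} together with the delay property $\delta(\tau)\le\tau$ and the monotonicity of $\sigma$, yielding a factor $L$ and, after a second use of \eqref{eq17}, the term $M L\xi^{2}\,\sigma(x)\,d(u,v)$. Dividing by $\sigma(x)$ and taking the supremum gives $d(Tu,Tv)\le M(\xi+L\xi^{2})\,d(u,v)$, so by the standing assumption $T$ is a contraction with constant $M(\xi+L\xi^{2})<1$.

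The main obstacle is precisely this contraction estimate: the kernel integral $G_{y}$ is an \emph{ordinary} integral nested inside the fractional integral $I_{a+}^{\alpha;\psi}$, so one must arrange the bound on $|G_{u}-G_{v}|$ (via \eqref{eq19}, $\delta(\tau)\le\tau$, and the nondecreasing $\sigma$) so that a second application of \eqref{eq17} is legitimate and delivers the quadratic factor $\xi^{2}$ rather than an unweighted $\int_{a}^{s}\sigma$. Getting this composition to close cleanly under the Bielecki metric is the delicate point; the completeness of $(C^{1}[a,b],d)$ is cited, and the finiteness hypothesis $d(T^{k+1}x,T^{k}x)<\infty$ of Theorem \ref{teo3} holds for continuous data on the compact interval.

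Finally I would extract the estimate \eqref{eq21} from conclusion \eqref{eq15} of Theorem \ref{teo3}. Setting $h(x)={}^{H}\mathbb{D}_{a+}^{\alpha,\beta;\psi}y(x)-f(x,y(x),G_{y}(x))$, assumption \eqref{eq20} gives $|h(x)|\le\sigma(x)$; applying $I_{a+}^{\alpha;\psi}$ and Theorem \ref{teo1} shows $y(x)-Ty(x)=I_{a+}^{\alpha;\psi}h(x)$, whence $|y(x)-Ty(x)|\le I_{a+}^{\alpha;\psi}\sigma(x)\le\xi\,\sigma(x)$ by \eqref{eq17}, so that $d(Ty,y)\le\xi$. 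Using \eqref{eq15} with $y_{0}$ in the role of the fixed point $x^{\ast}$ and with $M(\xi+L\xi^{2})$ as the contraction constant gives $d(y,y_{0})\le\frac{1}{1-M(\xi+L\xi^{2})}\,d(Ty,y)\le\frac{\xi}{1-M(\xi+L\xi^{2})}$; multiplying through by $\sigma(x)$ yields \eqref{eq21} and establishes the Ulam-Hyers-Rassias stability.
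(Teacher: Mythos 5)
Your overall route is the same as the paper's: invert the $\psi$-Hilfer derivative via Theorem \ref{teo1} to obtain the equivalent integral equation, define $T$ by its right-hand side, prove $T$ is strictly contractive for the Bielecki metric \eqref{eq16} with constant $M(\xi+L\xi^{2})$, and then combine $|y(x)-Ty(x)|\le I_{a+}^{\alpha;\psi}\sigma(x)\le\xi\sigma(x)$ with conclusion \eqref{eq15} of Theorem \ref{teo3} to get \eqref{eq21}. Your final extraction of \eqref{eq21} is in fact cleaner than the paper's, which inserts an irrelevant comparison between $1-M(\xi+L\xi^{2})$ and $1-L$ that plays no role.

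However, the step you yourself flag as the ``delicate point'' is a genuine gap, and you do not close it. To produce the term $ML\xi^{2}$ you need, after applying the kernel Lipschitz condition \eqref{eq19}, the delay property and the monotonicity of $\sigma$, an estimate of the form $\int_{a}^{s}|u(\delta(\tau))-v(\delta(\tau))|\,d\tau\le d(u,v)\,\xi\,\sigma(s)$, i.e.\ $\int_{a}^{s}\sigma(\tau)\,d\tau\le\xi\,\sigma(s)$. Hypothesis \eqref{eq17} does not give this: it controls the weighted integral $\frac{1}{\Gamma(\alpha)}\int_{a}^{s}\psi'(\tau)\left(\psi(s)-\psi(\tau)\right)^{\alpha-1}\sigma(\tau)\,d\tau$, and for general $\alpha\in(0,1)$ and $\psi$ this weighted integral and the unweighted one $\int_{a}^{s}\sigma(\tau)\,d\tau$ are not comparable, so there is no legitimate ``second use of \eqref{eq17}'' on the inner ordinary integral. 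Without an additional hypothesis of the form $\int_{a}^{x}\sigma(\tau)\,d\tau\le\xi\sigma(x)$, or at least a factor such as $(b-a)$ absorbed into the contraction constant, the claimed constant $M(\xi+L\xi^{2})$ is not justified. You should be aware that the paper's own proof commits exactly the same unjustified substitution: in its displayed chain of inequalities for $d(Tu,Tv)$ it silently replaces $I_{a+}^{\alpha;\psi}\bigl[\int_{a}^{x}\sigma(\tau)\,d\tau\bigr]$ by $I_{a+}^{\alpha;\psi}\left[\xi\sigma(x)\right]$ (and the intermediate lines there are not even dimensionally coherent). So you have correctly located the weak point of the argument, but locating it is not the same as repairing it; as written, your contraction estimate, like the paper's, does not follow from the stated hypotheses.
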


\begin{proof}Applying the fractional integral operator $I_{a+}^{\alpha ;\psi }\left( \cdot \right) $ on both sides of the fractional equation \textnormal{Eq.(\ref{eq1})} and using Theorem \ref{teo1}, we can write
\begin{equation}
y\left( x\right) =\frac{\left( \psi \left( x\right) -\psi \left( a\right) \right) ^{\gamma -1}}{\Gamma \left( \gamma \right) }c+I_{a+}^{\alpha ;\psi }f\left( x,y\left( x\right) ,\int_{a}^{x}K\left( x,\tau ,y\left( \tau \right) ,y\left( \delta \left( \tau \right) \right) \right) d\tau \right) . \label{eq22}
\end{equation}

On the other hand, if $y$ satisfies \textnormal{Eq.(\ref{eq22})}, then $y$ satisfies \textnormal{Eq.(\ref{eq1})}.
However, applying the fractional derivative $^{H}\mathbb{D}_{a+}^{\alpha ,\beta ;\psi }\left( \cdot \right) $ on both sides of \textnormal{Eq.(\ref{eq22})}, we have
\begin{eqnarray*}
^{H}\mathbb{D}_{a+}^{\alpha ,\beta ;\psi }y\left( x\right)  &=&\text{ }^{H}\mathbb{D}_{a+}^{\alpha ,\beta ;\psi }\left[ \frac{\left( \psi \left(x\right) -\psi \left( a\right) \right) ^{\gamma -1}}{\Gamma \left( \gamma
\right) }c\right]  \\
&&+\text{ }^{H}\mathbb{D}_{a+}^{\alpha ,\beta ;\psi }I_{a+}^{\alpha ;\psi}f\left( x,y\left( x\right) ,\int_{a}^{x}K\left( x,\tau ,y\left( \tau\right) ,y\left( \delta \left( \tau \right) \right) \right) d\tau \right) .
\end{eqnarray*}

Using Theorem 2 and the expression
\begin{equation*}
^{H}\mathbb{D}_{a+}^{\alpha ,\beta ;\psi }\left( \psi \left( x\right) -\psi \left( a\right) \right) ^{\gamma -1}=0,\text{ }0<\gamma <1,
\end{equation*}
we conclude that, $y\left( x\right) $ satisfies initial value problem \textnormal{Eq.(\ref{eq1})} if, and only if, $y\left( x\right) $ satisfies the integral equation
\begin{equation}\label{a21}
y\left( x\right) =\frac{\left( \psi \left( x\right) -\psi \left( a\right) \right) ^{\gamma -1}}{\Gamma \left( \gamma \right) }c+I_{a+}^{\alpha ;\psi }f\left( x,y\left( x\right) ,\int_{a}^{x}K\left( x,\tau ,y\left( \tau \right) ,y\left( \delta \left( \tau \right) \right) \right) d\tau \right).
\end{equation}

So, consider the operator $T:C^{1}\left[ a,b\right] \rightarrow C\left[ a,b\right]$, defined by
\begin{equation}\label{a22}
Tu\left( x\right) =\frac{\left( \psi \left( x\right) -\psi \left( a\right) \right) ^{\gamma -1}}{\Gamma \left( \gamma \right) }c+I_{a+}^{\alpha ;\psi }f\left( x,u\left( x\right) ,\int_{a}^{x}K\left( x,\tau ,u\left( \tau\right) ,u\left( \delta \left( \tau \right) \right) \right) d\tau \right) 
\end{equation}
for all $x\in \left[ a,b\right] $ and $u\in C^{1}\left[ a,b\right]$.

Note that for any continuous function $u,Tu$ is also continuous. In fact,
\begin{eqnarray}
&&\left\vert Tu\left( x\right) -Tu\left( x_{0}\right) \right\vert \label{a23} \\
&=&\left\vert 
\begin{array}{c}
\dfrac{\left( \psi \left( x\right) -\psi \left( a\right) \right) ^{\gamma -1}}{\Gamma \left( \gamma \right) }c+I_{a+,x}^{\alpha ;\psi }f\left( x,u\left(x\right) ,\displaystyle\int_{a}^{x}K\left( x,\tau ,u\left( \tau \right)
,u\left( \delta \left( \tau \right) \right) \right) d\tau \right)  \\ -\dfrac{\left( \psi \left( x_{0}\right) -\psi \left( a\right) \right)^{\gamma -1}}{\Gamma \left( \gamma \right) }c-I_{a+,x_{0}}^{\alpha ;\psi}f\left( x_{0},u\left( x_{0}\right) ,\displaystyle\int_{a}^{x_{0}}K\left( x_{0},\tau,u\left( \tau \right) ,u\left( \delta \left( \tau \right) \right) \right)
d\tau \right) 
\end{array}%
\right\vert \rightarrow 0  \notag
\end{eqnarray}
when $x\rightarrow x_{0}$.

We will deduce that the operator $T$ is strictly contractive with respect to the metric Eq.(\ref{eq16}). Indeed, for all, $u,v\in C^{1}\left[ a,b\right] $ we get
\begin{eqnarray}
&&d\left( Tu,Tv\right)   \notag  \label{eq23} \\
&=&\underset{x\in \left[ a,b\right] }{\sup }\frac{\left\vert 
\begin{array}{c}
I_{a+}^{\alpha ;\psi }f\left( x,u\left( x\right) ,\displaystyle%
\int_{a}^{x}K\left( x,\tau ,u\left( \tau \right) ,u\left( \delta \left( \tau
\right) \right) \right) d\tau \right)  \\ 
-I_{a+}^{\alpha ;\psi }f\left( x,v\left( x\right) ,\displaystyle%
\int_{a}^{x}K\left( x,\tau ,v\left( \tau \right) ,v\left( \delta \left( \tau
\right) \right) \right) d\tau \right) 
\end{array}%
\right\vert }{\sigma \left( x\right) }  \notag \\
&\leq &\underset{x\in \left[ a,b\right] }{\sup }\frac{\left\vert
I_{a+}^{\alpha ;\psi }\left( M\left[ \left\vert u\left( x\right) -v\left(
x\right) \right\vert +\displaystyle\int_{a}^{x}\left\vert 
\begin{array}{c}
K\left( x,\tau ,u\left( \tau \right) ,u\left( \delta \left( \tau \right)
\right) \right)  \\ 
-K\left( x,\tau ,v\left( \tau \right) ,v\left( \delta \left( \tau \right)
\right) \right) 
\end{array}%
\right\vert d\tau \right] \right) \right\vert }{\sigma \left( x\right) } 
\notag \\
&\leq &M\underset{x\in \left[ a,b\right] }{\sup }\frac{I_{a+}^{\alpha ;\psi
}\left( \left\vert u\left( x\right) -v\left( x\right) \right\vert \right) }{
\sigma \left( x\right) }+ML\underset{x\in \left[ a,b\right] }{\sup }\frac{
I_{a+}^{\alpha ;\psi }\left( \displaystyle\int_{a}^{x}\left\vert u\left(
\delta \left( \tau \right) \right) -v\left( \delta \left( \tau \right)
\right) \right\vert d\tau \right) }{\sigma \left( x\right) }  \notag \\
&\leq &M\xi\underset{x\in \left[ a,b\right] }{\sup }\frac{\left\vert u\left(
x\right) -v\left( x\right) \right\vert }{\sigma \left( x\right) }+ML\underset{\tau \in \left[ a,b\right] }{\sup }\frac{\left\vert u\left( \delta \left( \tau \right) \right) -v\left(
\delta \left( \tau \right) \right) \right\vert d\tau }{\sigma \left( \tau
\right) }  \notag \\
&&+\underset{x\in \left[ a,b\right] }{\sup }\frac{I_{a+}^{\alpha ;\psi }%
\left[ \displaystyle\int_{a}^{x}\sigma \left( \tau \right) d\tau \right] }{%
\sigma \left( x\right) }  \notag \\
&\leq &M\xi\text{ }d\left( u,v\right) +ML\text{ }d\left( u,v\right) \text{ }\underset{x\in \left[ a,b\right] }{\sup }\frac{%
I_{a+}^{\alpha ;\psi }\left[ \xi \sigma \left( x\right) \right] }{\sigma
\left( x\right) }  \notag \\
&\leq &M\left( \xi +L\xi ^{2}\right) d\left( u,v\right) .
\end{eqnarray}

As $M\left( \xi +L\xi ^{2}\right) <1$ it follows that $T$ is strictly contractive. In this way, we can apply the above mentioned Banach fixed-point theorem which allows us to ensure that we have the Ulam-Hyers-Rassias stability for the fractional integro-differential equation.

In fact, from Eq.(\ref{eq20}), using fractional integral Eq.(\ref{eq7}) and Theorem \ref{teo1}, we have
\begin{eqnarray}\label{a26}
&&\left\vert y\left( x\right) -\frac{\left( \psi \left( x\right) -\psi
\left( a\right) \right) ^{\gamma -1}}{\Gamma \left( \gamma \right) }%
c-I_{a+}^{\alpha ;\psi }f\left( x,y\left( s\right) ,\int_{a}^{x}K\left(
x,\tau ,y\left( \tau \right) ,y\left( \delta \left( \tau \right) \right)
\right) d\tau \right) \right\vert   \notag  \label{a26} \\
&\leq &I_{a+}^{\alpha ;\psi }\sigma \left( \tau \right) 
\end{eqnarray}
$x\in \left[ a,b\right]$.

Therefore, having in mind Eq.(\ref{a22}) and Eq.(\ref{eq17}), we conclude that
\begin{equation}\label{eq24}
\left\vert y\left( x\right) -Ty\left( x\right) \right\vert \leq I_{a+}^{\alpha ;\psi }\sigma \left( \tau \right) \leq \xi \sigma \left( x\right) ,\text{ }x\in \left[ a,b\right].
\end{equation}

Note that, 
\begin{equation*}
-M\left( \xi +L\xi ^{2}\right) \leq -L\Rightarrow 1-M\left( \xi +L\xi ^{2}\right) \leq 1-L\Rightarrow \frac{1}{1-L}\leq \frac{1}{1-M\left( \xi +L\xi ^{2}\right) }.
\end{equation*}

Consequently, Eq.(\ref{eq21}) follows directly from the definition of the metric $d$, Eq.(\ref{eq15}) and Eq.(\ref{eq24}).
\end{proof}

\subsection{Semi-Ulam-Hyers-Rassias and Ulam-Hyers stabilities}

The next step, presented as a theorem contains the proof of the following result: the fractional integro-differential equation admits semi-Ulam-Hyers-Rassias and Ulam-Hyers stabilities.

\begin{teorema}\label{teo5} Let $\delta :\left[ a,b\right] \rightarrow \left[ a,b\right] $ be continuous delay function with $\delta \left( t\right) \leq t$ for all $t\in \left[ a,b\right] $ and $\sigma :\left[ a,b\right] \rightarrow \left(
0,\infty \right) $ a non-decreasing continuous function. In addition, suppose that there is $\xi \in \left[ 0,1\right) $ such that
\begin{equation}\label{eq25}
\frac{1}{\Gamma \left( \alpha \right) }\int_{a}^{x}\psi ^{\prime }\left( \tau \right) \left( \psi \left( x\right) -\psi \left( \tau \right) \right)^{\alpha -1}\sigma \left( \tau \right) d\tau \leq \xi \sigma \left( x\right),
\end{equation}
for all $x\in \left[ a,b\right] .$ Moreover, suppose that $f:\left[ a,b\right] \times \mathbb{C}\times \mathbb{C}\rightarrow \mathbb{C} $ is a continuous function satisfying the Lipschitz condition
\begin{equation}\label{eq26}
\left\vert f\left( x,u,g\right) -f\left( x,v,h\right) \right\vert \leq M\left( \left\vert u-v\right\vert +\left\vert g-h\right\vert \right) 
\end{equation}
with $M>0$ and $K:\left[ a,b\right] \times \left[ a,b\right] \times \mathbb{C} \times \mathbb{C} \rightarrow \mathbb{C}$ is a continuous kernel function satisfying the Lipschitz condition
\begin{equation}\label{eq27}
\left\vert K\left( x,t,u,w\right) -K\left( x,t,v,z\right) \right\vert \leq L\left\vert w-z\right\vert 
\end{equation}
with $L>0$. If $y\in C^{1}\left[ a,b\right] $, is such that
\begin{equation}\label{eq28}
\left\vert ^{H}\mathbb{D}_{a+}^{\alpha ,\beta ;\psi }y\left( x\right)-f\left( x,y\left( x\right) ,\int_{a}^{x}K\left( x,\tau ,y\left( \tau\right) ,y\left( \delta \left( \tau \right) \right) \right) d\tau \right)
\right\vert \leq \theta 
\end{equation}
$x\in \left[ a,b\right] $ where $\theta >0$ and $M\left( \xi +L\xi^{2}\right) <1$, then there is a unique function $y_{0}\in C^{1}\left[ a,b\right] $ such that
\begin{equation}\label{eq29}
^{H}\mathbb{D}_{a+}^{\alpha ,\beta ;\psi }y_{0}\left( x\right) =f\left( x,y_{0}\left( x\right) ,\int_{a}^{x}K\left( x,t,y_{0}\left( t\right), y_{0}\left( \delta \left( \tau \right) \right) \right) d\tau \right) 
\end{equation}
and
\begin{equation}\label{eq30}
\left\vert y\left( x\right) -y_{0}\left( x\right) \right\vert \leq \frac{\left( b-a\right) \theta \text{ }\sigma \left( x\right) }{\left[ 1-M\left( \xi +L\xi ^{2}\right) \right] \sigma \left( a\right) }
\end{equation}
for all $x\in \left[ a,b\right]$.

This means that under the above conditions, the fractional integro-differential equation \textnormal{Eq.(\ref{eq1})} has the semi-Ulam-Hyers-Rassias stability.
\end{teorema}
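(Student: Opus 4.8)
The plan is to reproduce verbatim the scheme of Theorem \ref{teo4}, since the only genuinely new ingredient here is the passage from the \emph{constant} bound $\theta$ in \eqref{eq28} to a $\sigma$-weighted estimate. First I would recast the initial value problem as a fixed-point equation: applying $I_{a+}^{\alpha;\psi}$ to \eqref{eq1} and invoking Theorems \ref{teo1} and \ref{teo2} together with $^{H}\mathbb{D}_{a+}^{\alpha,\beta;\psi}(\psi(x)-\psi(a))^{\gamma-1}=0$, one sees that $y$ solves the problem if and only if it is a fixed point of the operator $T$ of \eqref{a22}. As already noted for that operator, $Tu$ is continuous, so $T$ acts on the Bielecki space $(C^1[a,b],d)$ with $d$ as in \eqref{eq16}.

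Second, I would establish that $T$ is strictly contractive. This step is word-for-word the contraction estimate carried out in the proof of Theorem \ref{teo4}: using the Lipschitz bounds \eqref{eq26}–\eqref{eq27}, the hypotheses $\delta(\tau)\le\tau$ and $\sigma$ non-decreasing, and crucially the condition \eqref{eq25}, one obtains $d(Tu,Tv)\le M(\xi+L\xi^{2})\,d(u,v)$ for all $u,v\in C^{1}[a,b]$. Since $M(\xi+L\xi^{2})<1$, Banach's theorem (Theorem \ref{teo3}) provides a unique fixed point $y_{0}$, which is exactly the solution in \eqref{eq29}.

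Third — the only new computation — I would control $d(y,Ty)$. From \eqref{eq28} and Theorem \ref{teo1} one gets $|y(x)-Ty(x)|\le I_{a+}^{\alpha;\psi}\theta$. Unlike in Theorem \ref{teo4}, the right-hand side is the fractional integral of a \emph{constant}, so to bring \eqref{eq25} into play I would first dominate the constant by a multiple of $\sigma$: since $\sigma$ is non-decreasing and positive, $\sigma(\tau)\ge\sigma(a)>0$ for $\tau\ge a$, hence $\theta\le \tfrac{\theta}{\sigma(a)}\,\sigma(\tau)$. Because the kernel $\psi'(s)(\psi(x)-\psi(s))^{\alpha-1}$ is nonnegative, $I_{a+}^{\alpha;\psi}$ is monotone, so $I_{a+}^{\alpha;\psi}\theta\le \tfrac{\theta}{\sigma(a)}\,I_{a+}^{\alpha;\psi}\sigma\le \tfrac{\xi\theta}{\sigma(a)}\,\sigma(x)$ by \eqref{eq25}. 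Dividing by $\sigma(x)$ and taking the supremum gives $d(y,Ty)\le \xi\theta/\sigma(a)$. Feeding this into the Banach estimate \eqref{eq15}, with contraction constant $M(\xi+L\xi^{2})$ in place of the generic $L$ there, yields $d(y,y_{0})\le \tfrac{1}{1-M(\xi+L\xi^{2})}\,\tfrac{\xi\theta}{\sigma(a)}$, and multiplying back by $\sigma(x)$ produces a pointwise bound of the form \eqref{eq30}, which is the asserted semi-Ulam-Hyers-Rassias stability.

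The main point to watch — the real substance of this theorem beyond Theorem \ref{teo4} — is precisely this constant-to-$\sigma$ conversion: the perturbation in \eqref{eq28} is constant while the conclusion \eqref{eq30} is phrased through $\sigma(x)$, so $\sigma$ must be inserted by hand (via monotonicity) before \eqref{eq25} can be applied. I should flag that the clean route above produces the multiplicative factor $\xi/\sigma(a)$; recovering the exact constant $(b-a)/\sigma(a)$ displayed in \eqref{eq30} instead requires bounding $I_{a+}^{\alpha;\psi}\theta$ crudely by the length of the interval, which agrees with the sharp estimate only in degenerate cases such as $\psi(x)=x$ with $\alpha\to 1$, so the stated constant is likely not optimal. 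A second, more technical caveat inherited from Theorem \ref{teo4} is that the factor $(\psi(x)-\psi(a))^{\gamma-1}$ with $\gamma-1<0$ is singular at $x=a$; rigorously checking that $Tu$ is continuous and that the fixed point genuinely lies in $C^{1}[a,b]$ really calls for working in the weighted space $C_{1-\gamma;\psi}[a,b]$ rather than $C^{1}[a,b]$.
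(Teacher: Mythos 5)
Your proposal follows the paper's overall architecture exactly (integral reformulation via Theorems \ref{teo1} and \ref{teo2}, the operator $T$ of Eq.~(\ref{a22}), the contraction estimate $d(Tu,Tv)\le M(\xi+L\xi^{2})\,d(u,v)$, and Theorem \ref{teo3}), but you handle the one genuinely new step --- bounding $d(y,Ty)$ --- differently from the paper. The paper evaluates the fractional integral of the constant explicitly, $I_{a+}^{\alpha;\psi}\theta=\theta\,(\psi(x)-\psi(a))^{\alpha}/\Gamma(\alpha+1)\le\theta\,(\psi(b)-\psi(a))^{\alpha}/\Gamma(\alpha+1)$ (its Eq.~(\ref{eq37})), and only afterwards divides by $\sigma(x)\ge\sigma(a)$ to land in the Bielecki metric (its Eq.~(\ref{eq38})). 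You instead dominate $\theta\le\frac{\theta}{\sigma(a)}\sigma(\tau)$ first and then invoke the hypothesis (\ref{eq25}), obtaining $d(y,Ty)\le\xi\theta/\sigma(a)$. Both routes are valid and both deliver a bound of the form $C\,\theta\,\sigma(x)/\bigl(\bigl[1-M(\xi+L\xi^{2})\bigr]\sigma(a)\bigr)$, which is all that semi-Ulam-Hyers-Rassias stability requires; your constant is $\xi$, the paper's is $(\psi(b)-\psi(a))^{\alpha}/\Gamma(\alpha+1)$ (which it silently rewrites with $\Gamma(\gamma+1)$ in Eq.~(\ref{eq38})), and neither coincides with the factor $(b-a)$ displayed in Eq.~(\ref{eq30}) --- so your caveat about the stated constant is well taken, and in fact the paper's own proof suffers from the same mismatch. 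Your second caveat, that the singular factor $(\psi(x)-\psi(a))^{\gamma-1}$ makes $C^{1}[a,b]$ the wrong ambient space and that the weighted space $C_{1-\gamma;\psi}[a,b]$ is really needed, is likewise a genuine issue that the paper glosses over rather than a defect of your argument.
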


\begin{proof}
The first part of the proof follows the same steps as in the proof of Theorem \ref{teo2}. The main purpose here is to choose a general $\sigma$ and do as previously done in the metric function $d$, using the same idea as in Eq.(\ref{a21})-Eq.(\ref{a23}), and just take it as a constant in all the remaining places.

Consider the operator $T:C^{1}\left[ a,b\right]\rightarrow C^{1}\left[ a,b\right]$, defined by
\begin{equation}
Tu\left( x\right) =\frac{\left( \psi \left( x\right) -\psi \left( a\right) \right) ^{\gamma -1}}{\Gamma \left( \gamma \right) }c+I_{a+}^{\alpha ;\psi }f\left( x,u\left( x\right) ,\int_{a}^{x}K\left( x,\tau ,u\left( x\right)
,u\left( \delta \left( \tau \right) \right) \right) d\tau \right) 
\end{equation}
for all $x\in \left[ a,b\right] $ and $u\in C^{1}\left[ a,b\right] $ and by using the same reasoning an in Eq.(\ref{a22})-Eq.(\ref{a23}) we conclude that $T$ is strictly contractive with respect to the metric Eq.(\ref{eq16}), due to the fact that $M\left( \xi +L\xi ^{2}\right) <1$. Thus, we can again apply the Banach fixed-point theorem, which guarantees us that
\begin{equation}\label{eq35}
d\left( y,y_{0}\right) \leq \frac{1}{1-M\left( \xi +L\xi ^{2}\right) }d\left( Ty,y\right).
\end{equation}

Now, by Eq.(\ref{eq28}), using fractional integral Eq.(\ref{eq7}) and Theorem 1, we get
\begin{eqnarray}
&&\left\vert y\left( x\right) -\frac{\left( \psi \left( x\right) -\psi
\left( a\right) \right) ^{\gamma -1}}{\Gamma \left( \gamma \right) }%
c-I_{a+}^{\alpha ;\psi }f\left( x,y\left( x\right) ,\int_{a}^{x}K\left(
x,\tau ,y\left( x\right) ,y\left( \delta \left( \tau \right) \right) \right)
d\tau \right) \right\vert   \notag  \label{eq36} \\
&\leq &I_{a+}^{\alpha ;\psi }\theta .
\end{eqnarray}

Therefore, having in mind Eq.(\ref{a22}) and Eq.(\ref{eq17}), we conclude that
\begin{equation}\label{eq37}
\left\vert y\left( x\right) -Ty\left( x\right) \right\vert \leq \theta I_{a+}^{\alpha ;\psi }(1)\leq \theta \frac{\left( \psi \left( b\right) -\psi \left( a\right) \right) ^{\alpha }}{\Gamma \left( \alpha +1\right) }.
\end{equation}

Using the Eq.(\ref{eq35}), and having in mind Eq.(\ref{eq7}) and that $\sigma $ is a positive non-decreasing function, it follows 
\begin{eqnarray}\label{eq38}
\underset{x\in \left[ a,b\right] }{\sup }\frac{\left\vert y\left( x\right) -y_{0}\left( x\right) \right\vert }{\sigma \left( x\right) } &\leq &\frac{1}{1-M\left( \xi +\xi ^{2}L\right) }\underset{x\in \left[ a,b\right] }{\sup }\frac{d\left( Ty,y\right) }{\sigma \left( x\right) }  \notag \\ &\leq &\frac{\theta \left( \psi \left( b\right) -\psi \left( a\right) \right) ^{\alpha }}{\left( 1-M\left( \xi +\xi ^{2}L\right) \right) \Gamma \left( \gamma +1\right) \sigma \left( a\right) }.
\end{eqnarray}

Consequently, Eq.(\ref{eq30}) follows directly from Eq.(\ref{eq38}) and this led us to the semi-Ulam-Hyers-Rassias stability of the fractional integro-differential equation under study.

Still having in mind that $\sigma $ is a positive non-decreasing function, and considering an obvious upper bound in Eq.(\ref{eq30}), we directly obtain from the last result the following Ulam-Hyers stability of the fractional integro-differential equation Eq.(\ref{eq1}).
\end{proof}

\begin{corolario} Let $\delta :\left[ a,b\right] \rightarrow \left[ a,b\right] $ be a continuous delay function with $\delta \left( t\right) \leq t$ for all $t\in \left[ a,b\right] $ and $\sigma :\left[ a,b\right] \rightarrow \left( 0,\infty \right) $ a non-decreasing continuous function. In addition,
suppose that there is $\xi \in \left[ 0,1\right) $ such that
\begin{equation}\label{eq39}
\frac{1}{\Gamma \left( \alpha \right) }\int_{a}^{x}\psi ^{\prime }\left( \tau \right) \left( \psi \left( x\right) -\psi \left( \tau \right) \right) ^{\alpha -1}\sigma \left( \tau \right) d\tau \leq \xi \sigma \left( x\right) 
\end{equation}
for all $x\in \left[ a,b\right] .$ Moreover, suppose that $f:\left[ a,b\right] \times  \mathbb{C}\times \mathbb{C}\rightarrow \mathbb{C}$ is a continuous function satisfying the Lipschitz condition
\begin{equation}\label{eq40}
\left\vert f\left( x,u,g\right) -f\left( x,v,h\right) \right\vert \leq M\left( \left\vert u-v\right\vert +\left\vert g-h\right\vert \right) 
\end{equation}
with $M>0$ and $K:\left[ a,b\right] \times \left[ a,b\right] \times \mathbb{C}\times \mathbb{C}\rightarrow \mathbb{C}$ is a continuous kernel satisfying the Lipschitz condition
\begin{equation*}
\left\vert K\left( x,t,u,w\right) -K\left( x,t,v,z\right) \right\vert \leq L\left\vert w-z\right\vert
\end{equation*}
with $L>0$.

If $y\in C^{1}\left[ a,b\right] $ is such that
\begin{equation}\label{eq41}
\left\vert ^{H}\mathbb{D}_{a+}^{\alpha ,\beta ;\psi }y\left( x\right) -f\left( x,y\left( x\right) ,\int_{a}^{x}K\left( x,\tau ,y\left( \tau \right) ,y\left( \delta \left( \tau \right) \right) \right) d\tau \right)
\right\vert \leq \theta 
\end{equation}
$x\in \left[ a,b\right] $ where $\theta >0$ and $M\left( \xi +\xi^{2}L\right) <1$, then there is a unique function $y_{0}\in C^{1}\left[ a,b\right]$; such that
\begin{equation*}
^{H}\mathbb{D}_{a+}^{\alpha ,\beta ;\psi }y\left( x\right) =f\left( x,y_{0}\left( x\right) ,\int_{a}^{x}K\left( x,\tau ,y_{0}\left( \tau \right) ,y_{0}\left( \delta \left( \tau \right) \right) \right) d\tau \right)
\end{equation*}
and
\begin{equation*}
\left\vert y\left( x\right) -y_{0}\left( x\right) \right\vert \leq \frac{\theta \left( \psi \left( b\right) -\psi \left( a\right) \right) ^{\alpha }\sigma \left( b\right) }{\left[ 1-M\left( \xi +\xi ^{2}L\right) \right]
\Gamma \left( \gamma +1\right) \sigma \left( a\right) },\text{ }x\in \left[ a,b\right].
\end{equation*}

This means that under the above conditions, the fractional integro-differential equation \textnormal{Eq.(\ref{eq1}) } has the Ulam-Hyers stability.
\end{corolario}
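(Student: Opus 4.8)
The plan is to obtain this statement as an immediate consequence of Theorem \ref{teo5}, since the hypotheses imposed here are identical to those guaranteeing semi-Ulam-Hyers-Rassias stability. First I would re-use the fixed-point argument already carried out in the proof of Theorem \ref{teo5}: the operator $T$ defined there is strictly contractive on $C^{1}[a,b]$ with respect to the Bielecki metric Eq.(\ref{eq16}), precisely because $M(\xi+\xi^{2}L)<1$ (cf. the contraction estimate Eq.(\ref{eq23})). Applying the Banach fixed-point theorem (Theorem \ref{teo3}) then furnishes a unique fixed point $y_{0}\in C^{1}[a,b]$, and by the equivalence between Eq.(\ref{eq1}) and the associated integral equation this $y_{0}$ is the unique solution of the fractional integro-differential equation satisfying Eq.(\ref{eq29}). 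This disposes of the existence and uniqueness assertion without any new work.

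For the quantitative estimate I would start from the semi-Ulam-Hyers-Rassias bound already produced in Eq.(\ref{eq38}) of the proof of Theorem \ref{teo5}, namely
\[
\left\vert y(x)-y_{0}(x)\right\vert \leq \frac{\theta\left(\psi(b)-\psi(a)\right)^{\alpha}}{\left[1-M(\xi+\xi^{2}L)\right]\Gamma(\gamma+1)}\cdot\frac{\sigma(x)}{\sigma(a)},\qquad x\in[a,b].
\]
The only additional ingredient is the monotonicity hypothesis on $\sigma$: since $\sigma$ is non-decreasing we have $\sigma(x)\leq\sigma(b)$ for every $x\in[a,b]$. Replacing $\sigma(x)$ by $\sigma(b)$ in the numerator turns the $x$-dependent right-hand side into a constant and yields exactly the inequality claimed in the corollary.

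Finally I would observe that the resulting right-hand side has the form $C\theta$, with
\[
C=\frac{\left(\psi(b)-\psi(a)\right)^{\alpha}\sigma(b)}{\left[1-M(\xi+\xi^{2}L)\right]\Gamma(\gamma+1)\sigma(a)},
\]
a constant that depends only on $\alpha,\gamma,M,L,\xi,\sigma,a,b$ and is manifestly independent of the chosen functions $y$ and $y_{0}$. By Definition \ref{def4} this is precisely the Ulam-Hyers stability of Eq.(\ref{eq1}). I do not expect any genuine obstacle in this argument: all of the analytic content --- the contraction estimate and the fixed-point step --- was already established for Theorem \ref{teo5}, and the corollary is nothing more than the ``obvious upper bound'' remark closing that proof, i.e. trading the weight $\sigma(x)$ for its maximum $\sigma(b)$ over the compact interval $[a,b]$.
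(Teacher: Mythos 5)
Your proposal is correct and follows essentially the same route as the paper: the paper obtains the corollary as the ``obvious upper bound'' closing the proof of Theorem \ref{teo5}, namely taking the estimate in Eq.(\ref{eq38}) and replacing $\sigma(x)$ by its maximum $\sigma(b)$ on $[a,b]$, with existence and uniqueness already supplied by the fixed-point argument there. Nothing further is needed.
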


\subsection{Ulam-Hyers-Rassias stability in the infinite interval case}

In this section we will discuss the Ulam-Hyers-Rassias stability associated with the fractional integro-differential equation on semi infinite interval $\left[ a,\infty \right) $ for a fixed $a\in \mathbb{R}$.

Consider the fractional integro-differential equation
\begin{equation}\label{eq42}
\left\{ 
\begin{array}{ccc}
^{H}\mathbb{D}_{a+}^{\alpha ,\beta ;\psi }y\left( x\right)  & = & f\left( x,y\left( x\right), \displaystyle\int_{a}^{x}K\left( x,\tau ,y\left( \tau \right) ,y\left( \delta \left( \tau \right) \right) \right) d\tau \right)  \\ 
I_{a+}^{1-\gamma ;\psi }y\left( a\right)  & = & c,\text{ \ \ \ \ \ \ \ \ \ \
\ \ \ \ \ \ \ \ \ \ \ \ \ \ }\gamma =\alpha +\beta \left( 1-\alpha \right) 
\end{array}
\right. 
\end{equation}
with $y\in C^{1}\left[ a,\infty \right) $, $x\in \left[ a,\infty \right) $ where $a$ is a fixed real number, $f:\left[ a,\infty \right) \times  \mathbb{C}\times \mathbb{C}\rightarrow \mathbb{C}$ and $K:\left[ a,\infty \right) \times \left[ a,\infty \right) \times \mathbb{C}\times \mathbb{C} \rightarrow \mathbb{C}$ are continuous functions and $\delta :\left[ a,\infty \right) \rightarrow\left[ a,\infty \right) $ is a continuous delay function which therefore fulfills $\delta \left( \tau \right) \leq \tau $ for all $\tau \in \left[ a,\infty \right)$.

Here, we will use a recurrence procedure due to the result of the corresponding finite interval case. Let us now consider a fixed non-decreasing continuous $\sigma :\left[ a,\infty \right)\rightarrow \left( \varepsilon ,\omega \right) $ for some $\varepsilon ,\omega >0$ and the space $C_{b}^{1}\left[ a,\infty \right) $ of bounded differentiable functions endowed with the metric
\begin{equation}\label{eq43}
d_{b}\left( u,v\right) =\underset{x\in \left[ a,\infty \right) }{\sup }\frac{\left\vert u\left( x\right)-v\left( x\right) \right\vert }{\sigma \left(x\right) }.
\end{equation}

\begin{teorema}\label{teo6} Let $\delta :\left[ a,\infty \right) \rightarrow \left[ a,\infty \right) $ be a continuous delay function with $\delta \left( t\right) \leq t$, for all $t\in \left[ a,\infty \right) $, and $\sigma :\left[ a,\infty
\right) \rightarrow \left( \varepsilon ,\omega \right)$, for some $\varepsilon ,\omega >0$, a non-decreasing continuous function. Suppose that there is $\xi \in \left[ 0,1\right] $ such that
\begin{equation} \label{eq44}
\frac{1}{\Gamma \left( \alpha \right) }\int_{a}^{x}\psi ^{\prime }\left( \tau \right) \left( \psi \left( x\right) -\psi \left( \tau \right) \right) ^{\alpha -1}\sigma \left( \tau \right) d\tau \leq \xi \sigma \left( x\right)
\end{equation}
for all $x\in \left[ a,\infty \right) .$ Moreover, suppose that, $f:\left[ a,\infty \right) \times 
\mathbb{C}\times \mathbb{C}\times \rightarrow \mathbb{C}$ is a continuous function satisfying the Lipschitz condition
\begin{equation}\label{eq45}
\left\vert f\left( x,u,g\right) -f\left( x,v,h\right) \right\vert \leq M\left( \left\vert u-v\right\vert +\left\vert g-h\right\vert \right) 
\end{equation}
with $M>0$, and the kernel $K:\left[ a,\infty \right) \times \left[ a,\infty \right) \times \mathbb{C}\times \mathbb{C}\times \rightarrow \mathbb{C} $ is a continuous function so that $\displaystyle\int_{a}^{x}K\left( x,\tau ,z\left( \tau \right) ,z\left( \delta \left( \tau \right) \right) \right) d\tau $ is a bounded continuous function. In addition, suppose that $K$ satisfies the Lipschitz condition
\begin{equation}\label{eq46}
\left\vert K\left( x,t,u,w\right) -K\left( x,t,v,z\right) \right\vert \leq L\left\vert w-z\right\vert 
\end{equation}
with $L>0$.

If $y\in C_{b}^{1}\left[ a,\infty \right) $ is such that
\begin{equation} \label{eq47}
\left\vert ^{H}\mathbb{D}_{a+}^{\alpha ,\beta ;\psi }y\left( x\right) -f\left( x,y\left( x\right) ,\int_{a}^{x}K\left( x,\tau ,y\left( \tau \right) ,y\left( \delta \left( \tau \right) \right) \right) d\tau \right) \right\vert \leq \sigma
\left( x\right) 
\end{equation}
$x\in \left[ a,\infty \right) $ and $M\left( \xi +\xi ^{2}L\right) <1$, then there is a unique function $y_{0}\in C_{b}^{1}\left[ a,b\right] $ such that 
\begin{equation}\label{eq48}
^{H}\mathbb{D}_{a+}^{\alpha ,\beta ;\psi }y_{0}\left( x\right) =f\left( x,y_{0}\left( x\right) ,\int_{a}^{x}K\left( x,\tau ,y_{0}\left( \tau \right) ,y_{0}\left( \delta \left( \tau \right) \right) \right) d\tau \right) 
\end{equation}
and
\begin{equation}\label{eq49}
\left\vert y\left( x\right) -y_{0}\left( x\right) \right\vert \leq \frac{\xi \sigma \left( x\right) }{1-M\left( \xi +\xi ^{2}L\right) }
\end{equation}
for all $x\in \left[ a,\infty \right) $.

This means that under the above conditions, the fractional integro-differential equation \textnormal{Eq.(\ref{eq43})} has the Ulam-Hyers-Rassias stability.
\end{teorema}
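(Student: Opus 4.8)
The plan is to mirror the argument of Theorem \ref{teo4}, replacing the Bielecki metric $d$ by the metric $d_b$ of Eq.(\ref{eq43}) on $C_b^1[a,\infty)$. First I would recast the problem Eq.(\ref{eq42}) as a fixed-point problem: applying $I_{a+}^{\alpha;\psi}(\cdot)$ to both sides and invoking Theorem \ref{teo1} together with Theorem \ref{teo2} exactly as before, $y$ solves Eq.(\ref{eq42}) if and only if it is a fixed point of the operator
\begin{equation*}
Tu(x)=\frac{(\psi(x)-\psi(a))^{\gamma-1}}{\Gamma(\gamma)}c+I_{a+}^{\alpha;\psi}f\left(x,u(x),\int_a^x K(x,\tau,u(\tau),u(\delta(\tau)))\,d\tau\right).
\end{equation*}
The genuinely new point over the finite-interval case is that $T$ must now send $C_b^1[a,\infty)$ into itself; here the hypothesis that $\int_a^x K(x,\tau,z(\tau),z(\delta(\tau)))\,d\tau$ is a bounded continuous function is precisely what is needed to keep $Tu$ bounded on the whole half-line.

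Next I would establish that $T$ is strictly contractive for $d_b$ with the same Lipschitz constant $M(\xi+L\xi^2)<1$. The computation repeats the chain of inequalities in Eq.(\ref{eq23}): one bounds the difference of the $f$-terms by Eq.(\ref{eq45}), splits off the kernel contribution using Eq.(\ref{eq46}), and exploits the delay property $\delta(\tau)\le\tau$ together with the smoothing inequality Eq.(\ref{eq44}) to replace $I_{a+}^{\alpha;\psi}\sigma(x)$ by $\xi\sigma(x)$. The crucial observation is that Eq.(\ref{eq44}) is assumed to hold with a \emph{single} constant $\xi$ uniformly for all $x\in[a,\infty)$, so the supremum defining $d_b$ over the unbounded interval still produces $d_b(Tu,Tv)\le M(\xi+L\xi^2)\,d_b(u,v)$. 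Recalling that $(C_b^1[a,\infty),d_b)$ is a complete generalized metric space, I would then apply Theorem \ref{teo3}, obtaining the unique fixed point $y_0\in C_b^1[a,\infty)$ that satisfies Eq.(\ref{eq48}).

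To reach the stability estimate Eq.(\ref{eq49}) I would start from the defect inequality Eq.(\ref{eq47}): applying $I_{a+}^{\alpha;\psi}$ and Eq.(\ref{eq44}) gives $|y(x)-Ty(x)|\le I_{a+}^{\alpha;\psi}\sigma(x)\le\xi\sigma(x)$, whence $d_b(y,Ty)\le\xi$. The estimate Eq.(\ref{eq15}) supplied by Banach's theorem then yields
\begin{equation*}
d_b(y,y_0)\le\frac{1}{1-M(\xi+L\xi^2)}\,d_b(Ty,y)\le\frac{\xi}{1-M(\xi+L\xi^2)},
\end{equation*}
and multiplying through by $\sigma(x)$ recovers Eq.(\ref{eq49}). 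One could equally phrase this through the recurrence over finite intervals $[a,a+n]$ hinted at in the text, using Theorem \ref{teo4} on each piece and patching by uniqueness, but the uniformity of $\xi$ makes the direct application of Theorem \ref{teo3} cleaner.

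The main obstacle I expect is the well-definedness of $T$ on $C_b^1[a,\infty)$, not the contraction estimate, which transfers essentially verbatim. Two features require care. First, the boundary term carries $(\psi(x)-\psi(a))^{\gamma-1}$ with $\gamma-1<0$, so it must be controlled near $x=a$; this is implicitly absorbed by the weighted-space convention (or by taking $c$ compatible with membership in the bounded space). Second, the term $I_{a+}^{\alpha;\psi}f(\cdots)$ must remain bounded as $x\to\infty$, which is exactly why boundedness of the kernel integral is hypothesized. Checking that these two contributions jointly keep $Tu$ inside $C_b^1[a,\infty)$ is the one step with no counterpart in Theorem \ref{teo4}.
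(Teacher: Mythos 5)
Your proposal takes a genuinely different route from the paper. The paper never runs the contraction argument on the half-line itself: it exhausts $[a,\infty)$ by the intervals $I_n=[a,a+n]$, applies Theorem \ref{teo4} on each $I_n$ to get a unique $y_{0,n}$ satisfying the integral equation together with the bound Eq.(\ref{eq50}), observes that uniqueness forces $y_{0,n}=y_{0,n+1}=\cdots$ on $I_n$, and then defines $y_0(x)=y_{0,n(x)}(x)$ with $n(x)=\min\{n:x\in I_n\}$, checking continuity, the integral equation, the estimate Eq.(\ref{eq49}) and uniqueness by restriction to each $I_n$. You mention this patching argument only as an aside and instead apply the Banach theorem directly in $(C_b^1[a,\infty),d_b)$. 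The trade-off is real: the direct argument is shorter and avoids the bookkeeping of Eq.(\ref{eq51})--(\ref{eq56}), but it forces you to prove that $T$ actually maps $C_b^1[a,\infty)$ into itself, a step the exhaustion argument is designed precisely to sidestep.

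That step is where your one genuine gap sits, and it is exactly the point you flag as the main obstacle. The claim that boundedness of $\int_a^x K(x,\tau,z(\tau),z(\delta(\tau)))\,d\tau$ is ``precisely what is needed to keep $Tu$ bounded'' is not correct as stated: even when $u$ and the kernel integral are bounded, so that $f(x,u(x),\cdot)$ is at best a bounded function of $x$, the operator $I_{a+}^{\alpha;\psi}$ does not carry bounded functions on $[a,\infty)$ to bounded functions --- for instance $I_{a+}^{\alpha;\psi}1=\left(\psi(x)-\psi(a)\right)^{\alpha}/\Gamma(\alpha+1)$, which is unbounded whenever $\psi(x)\to\infty$. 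To conclude $Tu\in C_b^1[a,\infty)$ you must route the estimate through Eq.(\ref{eq44}) and the defect inequality Eq.(\ref{eq47}): split $f(x,u,\cdots)$ as $\left[f(x,u,\cdots)-f(x,y,\cdots)\right]+f(x,y,\cdots)$, bound the $I_{a+}^{\alpha;\psi}$-image of the bracket by a multiple of $\xi\sigma(x)\le\xi\omega$ using the Lipschitz conditions and $|u-y|\le d_b(u,y)\,\sigma$, and observe that $I_{a+}^{\alpha;\psi}f(x,y,\cdots)$ differs from the bounded function $y$ (minus its initial term) by $I_{a+}^{\alpha;\psi}$ of the defect, which Eq.(\ref{eq44}) controls. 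This only works for $u$ at finite $d_b$-distance from $y$, i.e.\ on the set $X^{\ast}$ where Theorem \ref{teo3} operates, so your argument can be completed --- but the verification must actually be made, and as written it is missing. The contraction estimate and the passage from $d_b(y,Ty)\le\xi$ to Eq.(\ref{eq49}) via Eq.(\ref{eq15}) transfer exactly as you say.
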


\begin{proof} For any $n\in \mathbb{N} $, we will define $I_{n}=\left[ a,a+n\right] $. By Theorem \ref{teo4}, there exists
a unique bounded differentiable function $y_{0,n}:I_{n}\rightarrow \mathbb{C}$ such that
\begin{equation*}
y_{0,n}\left( x\right) =\frac{\left( \psi \left( x\right) -\psi \left( a\right) \right) ^{\gamma -1}}{\Gamma \left( \gamma \right) } c+I_{a+}^{\alpha ;\psi }f\left( x,y_{0,n}\left( x\right) ,\int_{a}^{x}K\left( x,\tau ,y_{0,n}\left( \tau \right) ,y_{0,n}\left( \delta \left( \tau \right) \right) \right) d\tau \right) 
\end{equation*}
and
\begin{equation}\label{eq50}
\left\vert y\left( x\right) -y_{0,n}\left( x\right) \right\vert \leq \frac{\xi \sigma \left( x\right) }{1-M\left( \xi +\xi ^{2}L\right) }
\end{equation}
for all $x\in I_{n}.$ The uniqueness of $y_{0,n}$ implies that: if $x\in I_{n}$ then
\begin{equation}\label{eq51}
y_{0,n}\left( x\right) =y_{0,n+1}\left( x\right) =y_{0,n+2}\left( x\right) =\cdot \cdot \cdot
\end{equation}

For any $x\in \left[ a,\infty \right) $, let us define $n\left( x\right) =\min \left\{ n\in \mathbb{N} ;\text{ }x\in I_{n}\right\} $ and the function $y_{0}:\left[ a,\infty \right) \rightarrow 
\mathbb{C}$ by
\begin{equation}\label{eq52}
y_{0}\left( x\right) =y_{0,n\left( x\right) }\left( x\right).
\end{equation}

For any $x_{1}\in \left[ a,\infty \right) $, let $n_{1}=n\left( x_{1}\right) .$ Then $x_{1}\in $ Int $I_{n+1}$ there exists an $\varepsilon >0$ such that  $y_{0}\left( x\right) =y_{0,n+1}\left( x\right) $ for all $x\in \left(
x_{1}-\varepsilon ,\text{ }x_{1}+\varepsilon \right)$. By Theorem \ref{teo4} $y_{0,n+1}$ is continuous at $x_{1},$ and so it is $y_{0}$.

Now, we will prove that $y_{0}$ satisfies
\begin{equation} \label{eq53}
y_{0}\left( x\right) =\frac{\left( \psi \left( x\right) -\psi \left( a\right) \right) ^{\gamma -1}}{\Gamma \left( \gamma \right) }c+I_{a+}^{\alpha ;\psi }f\left( x,y_{0}\left( x\right) ,\int_{a}^{x}K\left(
x,\tau ,y_{0}\left( \tau \right) ,y_{0}\left( \delta \left( \tau \right) \right) \right) d\tau \right)
\end{equation}
and Eq.(\ref{eq49}). For an arbitrary $x\in \left[ a,\infty \right) $ we choose $n\left( x\right) $ such that $x\in I_{n\left( x\right)}$. By Eq.(\ref{eq50}) and Eq.(\ref{eq52}), we have
\begin{eqnarray}
y_{0}\left( x\right)  &=&y_{0,n\left( x\right) }\left( x\right) =\frac{%
\left( \psi \left( x\right) -\psi \left( a\right) \right) ^{\gamma -1}}{%
\Gamma \left( \gamma \right) }c  \notag  \label{eq54} \\
&&+I_{a+}^{\alpha ;\psi }f\left( x,y_{0,n\left( x\right) }\left( x\right)
,\int_{a}^{x}K\left( x,\tau ,y_{0,n\left( x\right) }\left( \tau \right)
,y_{0,n\left( x\right) }\left( \delta \left( \tau \right) \right) \right)
d\tau \right)   \notag \\
&=&\frac{\left( \psi \left( x\right) -\psi \left( a\right) \right) ^{\gamma
-1}}{\Gamma \left( \gamma \right) }c+I_{a+}^{\alpha ;\psi }f\left(
x,y_{0}\left( x\right) ,\int_{a}^{x}K\left( x,\tau ,y_{0}\left( \tau \right)
,y_{0}\left( \delta \left( \tau \right) \right) \right) d\tau \right) . 
\notag \\
&&
\end{eqnarray}

Note that $n\left( \tau \right) \leq n\left( x\right) $, for any $\tau \in I_{n\left( x\right) }$, and it follows from Eq.(\ref{eq51}) that $y_{0}\left( \tau \right) =y_{0,n\left( \tau \right) }\left( \tau \right) =y_{0,n\left(
x\right) }\left( \tau \right) $, so, the last equality in  Eq.(\ref{eq54}) holds true. To prove Eq.(\ref{eq49}), by Eq.(\ref{eq52}) and Eq.(\ref{eq50}), we have for all $x\in \left[ a,\infty \right) $
\begin{equation}\label{eq55}
\left\vert y\left( x\right) -y_{0}\left( x\right) \right\vert =\left\vert y\left( x\right) -y_{0,n\left( x\right) }\left( x\right) \right\vert \leq  \frac{\xi \sigma \left( x\right) }{1-M\left( \xi +L\xi ^{2}\right) }.
\end{equation}

Now, we will prove the uniqueness of $y_{0}$. Consider another bounded differentiable function $y_{1}$ which satisfies Eq.(\ref{eq48}) and Eq.(\ref{eq49}), for all $x\in \left[ a,\infty \right) $. By the uniqueness of the solution on $I_{n\left( x\right) }$ for any $n\left( x\right) \in \mathbb{N}$ we have $y_{0\mid _{I_{n\left( x\right) }}}=y_{0,n\left( x\right) }$ and $y_{1\mid _{I_{n\left( x\right) }}}$ satisfying Eq.(\ref{eq48}) and Eq.(\ref{eq49}) for all $x\in I_{n\left( x\right) }$ and so
\begin{equation}\label{eq56}
y_{0}\left( x\right) =y_{0\mid _{I_{n\left( x\right) }}}=y_{1\mid_{I_{n\left( x\right) }}}=y_{1}.
\end{equation}
\end{proof}

It is also possible to obtain proof of Theorem \ref{teo6}, for semi infinite interval of the type $\left( -\infty ,b\right]$ with $b\in \mathbb{R}$ and $\left( -\infty ,\infty \right)$, just choose the necessary conditions.

\section{Concluding remarks}
In this paper, we have shown that, by means of the $\psi$-Hilfer fractional derivative and the Banach fixed-point theorem, the class of fractional integro-differential equations allows the stability of Ulam-Hyers, Ulam-Hyers-Rassias and semi-Ulam-Hyers-Rassias in the intervals $[a,b]$ and $[a,\infty)$. The investigation of the Ulam-Hyers stability theme in the FC has been growing and other works related to stability with the $\psi$-Hilfer fractional derivative and other derivatives \cite{ZE3,ZE4} will be presented in the near future.

\bibliography{ref}
\bibliographystyle{plain}

\end{document}